\newcommand{\be}{\begin{equation}}
\newcommand{\ee}{\end{equation}}
\renewcommand{\Re}{\mathop{\rm Re}\nolimits}
\newcommand{\sh}{\mathop{\rm sh}\nolimits}
\newcommand{\ch}{\mathop{\rm ch}\nolimits}
\newcommand{\res}{\mathop{\rm res}\limits}
\newtheorem{theorem}{Theorem}
\newtheorem*{corollary}{Corollary}
\newcommand{\specialnumber}[1]{
\def\tagform@##1{\maketag@@@{(\ignorespaces##1\unskip\@@italiccorr#1)}}}
\def\ps@pprintTitle{%
\let\@oddhead\@empty
\let\@evenhead\@empty
\let\@oddfoot\@empty
\let\@evenfoot\@oddfoot
}\makeatother
\begin{document}

\begin{frontmatter}

\title{A complement to a recent paper on some infinite sums \\ with the zeta values}

\author{Iaroslav V.~Blagouchine\corref{cor1}} 
\ead{iaroslav.blagouchine@univ-tln.fr}

\begin{abstract}
Recently, several new results related to the evaluation of the series $\sum (-1)^n\zeta(n)/(n+k)$
were published. In this short note we show that this series also possesses
an interesting connection to the values of the $\zeta$--function on the 
critical line and to the Euler constant.
\end{abstract}

\begin{keyword}
Zeta--function, zeta values, critical line, closed--form evaluation, integral representation, Euler constant, 
complex integration, Cauchy residue theorem.
\end{keyword}

\end{frontmatter}

\section{Introduction}
In a recent paper Coppo \cite{coppo_02} investigated the series 
\be\label{73bc27cveywue}
\nu_k \equiv \sum_{j=2}^{\infty} (-1)^j \frac{\zeta(j)}{j+k}\,,\qquad k\in\{-1,0\}\cup\mathbbm{N}\,,
\ee
and obtained various interesting properties by comparing different closed--form expressions
for it. The same series was earlier studied in \cite[p.~413, Eq.~(38)]{iaroslav_08},
where we also obtained a closed--form expression for it.\footnote{Except for the case $k=-1$, which was not studied 
there.} In this short note, we devise yet another expression for the same series, 
showing that there exist an intimate connection between the values of the $\zeta$--function on the 
critical line $\,\nicefrac12+it$, $t\in\mathbbm{R}$, Euler's constant $\gamma$
and the fundamental values of the zeta function at positive integers $\zeta(n)$, $n=2,3,4,\ldots$ 
Furthermore, the obtained expression may also be useful
in that sense that it also holds for non--integer and even complex values of $k$.

\section{The results}
We present our results in the form of two theorems with a corollary. Since the proofs of both theorems
are quite similar, for the purpose of brevity we provide the proof only for the first theorem.

\begin{theorem}
The series $\nu_k$ is closely connected to the values of the $\zeta$--function on the critical line.  
In particular, $\nu_k$ may be evaluated via the following integral with an exponentially decreasing kernel:  
\be\label{p084nbc34p}
\nu_\omega = - \frac{\,1\,}{\,(\omega+1)^2} + \frac{\gamma}{\omega+1}  \, -
\, \frac{1}{2}\!\int\limits_{-\infty}^{+\infty} \! \frac{\zeta\big(\nicefrac{1}{2} \pm ix\big)}
{\big(\nicefrac{1}{2} \pm ix+ \omega \big)\ch\pi x}\, dx\,,\qquad \Re\omega > -\tfrac12
\ee
\end{theorem}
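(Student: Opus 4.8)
The plan is to realise $\nu_\omega$ as a sum of residues of a suitable meromorphic kernel and then to collapse the corresponding contour onto the critical line. To this end consider
\[
g(z)\;\equiv\;\frac{\pi\,\zeta(z)}{(z+\omega)\sin\pi z}\,,
\]
whose poles and residues are transparent: at $z=j$ with $j=2,3,4,\ldots$ it has a simple pole with residue $(-1)^j\zeta(j)/(j+\omega)$ (since $\res_{z=j}\pi/\!\sin\pi z=(-1)^j$ and $\zeta$ is holomorphic there), at $z=-\omega$ a simple pole, and at $z=1$ a double pole produced by the pole of $\zeta$. The decisive elementary observation is that on the line $z=\tfrac12+ix$ one has $\sin\pi z=\cos(i\pi x)=\ch\pi x$, so that integrating $g$ along $\Re z=\tfrac12$ automatically produces the exponentially small kernel $1/\ch\pi x$ of \eqref{p084nbc34p}.

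First I would apply the Cauchy residue theorem to $g$ on the rectangle $R_{N,T}$ with vertical sides $\Re z=\tfrac12$ and $\Re z=N+\tfrac12$ (with $N\in\mathbbm{N}$, so that no pole lies on them) and horizontal sides $\Im z=\pm T$, $T>0$. Since $\Re\omega>-\tfrac12$, the pole $z=-\omega$ has $\Re(-\omega)<\tfrac12$ and therefore lies strictly to the left of $R_{N,T}$; hence the only enclosed singularities are $z=1$ and $z=2,\ldots,N$, and the contour integral equals $2\pi i\bigl(\res_{z=1}g+\sum_{j=2}^{N}(-1)^j\zeta(j)/(j+\omega)\bigr)$.

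Next I would dispose of three of the four sides. On the horizontal sides $|\sin\pi z|\ge\sh\pi T$, while $|\zeta|$ grows at most polynomially in $T$ uniformly for $\tfrac12\le\Re z\le N+\tfrac12$ and $|z+\omega|^{-1}$ is bounded by $(\tfrac12+\Re\omega)^{-1}$, so these contributions are $O\!\bigl(N\,T\,e^{-\pi T}\bigr)\to0$ as $T\to\infty$ for fixed $N$. On the right side $\Re z=N+\tfrac12$ one has $|\sin\pi z|=\ch(\pi\Im z)\ge1$, $|\zeta|\le\zeta(\tfrac32)$, and $|z+\omega|^{-1}=O(1/N)$, so by $\int_{-\infty}^{+\infty}dy/\ch\pi y=1$ this side is $O(1/N)\to0$ as $N\to\infty$. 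Letting first $T\to\infty$ and then $N\to\infty$ — the residue sum converges because $\sum(\zeta(j)-1)/(j+\omega)$ is absolutely convergent and the remaining tail is alternating — only the left side $\Re z=\tfrac12$ survives, traversed downward, contributing $-i\!\int_{-\infty}^{+\infty}g(\tfrac12+ix)\,dx$ to the closed-contour identity; solving for $\nu_\omega$ gives
\[
\nu_\omega\;=\;-\res_{z=1}g\;-\;\frac{1}{2\pi}\!\int\limits_{-\infty}^{+\infty}\!\frac{\pi\,\zeta(\tfrac12+ix)}{(\tfrac12+ix+\omega)\,\ch\pi x}\,dx\,.
\]

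It remains to evaluate the double-pole residue. Writing $u=z-1$ and inserting $\zeta(z)=u^{-1}+\gamma+O(u)$, $\pi/\!\sin\pi z=-u^{-1}+O(u)$ (the Laurent expansion has no constant term), and $(z+\omega)^{-1}=(1+\omega)^{-1}-(1+\omega)^{-2}u+O(u^2)$, the coefficient of $u^{-1}$ in $g$ turns out to be $(1+\omega)^{-2}-\gamma(1+\omega)^{-1}$. Substituting $\res_{z=1}g=(\omega+1)^{-2}-\gamma(\omega+1)^{-1}$ into the last display reproduces \eqref{p084nbc34p} with the upper sign; the version with the lower sign follows immediately from the change of variable $x\mapsto-x$, since $\ch\pi x$ is even. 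I expect the only steps needing genuine care to be the uniform polynomial bound for $\zeta$ on the horizontal segments (a standard estimate for the growth of $\zeta$ in vertical strips) and the bookkeeping in the Laurent expansion at $z=1$; the remainder is routine.
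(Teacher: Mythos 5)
Your argument is correct, and underneath it is the same mechanism as the paper's: apply the residue theorem to $\zeta$ multiplied by a kernel whose simple poles at the integers $j\geqslant 2$ have residues $(-1)^j$, so that the sum $\nu_\omega$ appears as a residue sum while the double pole at $s=1$ produces the terms $-(\omega+1)^{-2}+\gamma(\omega+1)^{-1}$. (Indeed your kernel $\pi/\sin\pi s$ on $\Re s=\tfrac12$ \emph{is} the paper's $1/\ch\pi z$ up to the substitution $s=\tfrac12-iz$ and a factor $\pi$, and your Laurent computation at $s=1$ matches the paper's derivative computation at $z=\tfrac{i}{2}$.) Where you genuinely diverge is the truncating contour: the paper closes the real axis in the $z$--plane by a semicircle of \emph{integer} radius $R$, which forces it to estimate $\int_0^{\pi}\!\big|\ch(\pi Re^{i\varphi})\big|^{-1}d\varphi=O(1/R)$ (the integrality of $R$ keeping the arc away from the poles of $\sech$, and the bound being only algebraically small near $\varphi=\pi/2$) and to invoke a growth estimate $\zeta(\tfrac12+\sigma'+it)=O(\sqrt{R})$ along the arc, citing Chudakov. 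Your rectangle with right edge at $\Re s=N+\tfrac12$ sidesteps both difficulties: on that edge $|\sin\pi s|=\ch(\pi\Im s)\geqslant 1$ and $\zeta$ is uniformly bounded, giving a clean $O(1/N)$, while the horizontal edges are killed by $e^{-\pi T}$ against any crude polynomial bound for $\zeta$ in a vertical strip. The one place where you wave your hands --- ``$|\zeta|$ grows at most polynomially in $T$ uniformly for $\tfrac12\leqslant\Re z\leqslant N+\tfrac12$'' --- is exactly the standard estimate the paper also needs (and cites), so nothing is missing; your route arguably trades the paper's most delicate estimate for a more routine one, at the cost of an iterated limit $T\to\infty$ then $N\to\infty$ that you correctly keep in the right order.
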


\begin{theorem}
For any $\omega\in\mathbbm{C}$ such that $\Re\omega > -3/2$, we also have
\be\label{lkcje2pin}
\nu_\omega = \frac{1}{2}\!\int\limits_{-\infty}^{+\infty} \! \frac{\zeta\big(\nicefrac{3}{2} \pm ix\big)}
{\big(\nicefrac{3}{2} \pm ix+ \omega \big)\ch\pi x}\, dx\,.
\ee
\end{theorem}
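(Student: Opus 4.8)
\medskip
\noindent\emph{Proof of Theorem~2 (strategy).}
The plan is to reuse the contour argument behind Theorem~1, but with the line of integration moved one unit to the right, past the pole of $\zeta$ at $s=1$. Introduce the kernel
\be
	g(s)\;:=\;\frac{\pi\,\zeta(s)}{(s+\omega)\,\sin\pi s}\,,
\ee
which is meromorphic with poles at the integers and at $s=-\omega$. Since $\sin\pi s$ has a simple zero with derivative $\pi(-1)^n$ at each integer $n$, and $\zeta$ is holomorphic at every integer $n\geq2$, we get $\res_{s=n}g(s)=(-1)^n\zeta(n)/(n+\omega)$ for $n\geq2$; hence $\sum_{n=2}^{N}\res_{s=n}g(s)$ is the $N$-th partial sum of $\nu_\omega$. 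The hypothesis $\Re\omega>-\nicefrac{3}{2}$ does two things at once: it makes this series converge (no term is singular, $\zeta(n)\to1$, the signs alternate), and it keeps the pole $s=-\omega$ strictly to the left of the vertical line $\Re s=\nicefrac{3}{2}$.

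First I would apply the Cauchy residue theorem to $g$ on the positively oriented rectangle $\mathscr{R}_{N,T}$ with vertical sides $\Re s=\nicefrac{3}{2}$ and $\Re s=N+\nicefrac{1}{2}$ (integer $N\geq2$) and horizontal sides $\Im s=\pm T$ ($T>0$), which encloses exactly the poles $s=2,3,\ldots,N$, obtaining
\be
	\frac{1}{2\pi i}\oint_{\mathscr{R}_{N,T}}\! g(s)\,ds\;=\;\sum_{n=2}^{N}\frac{(-1)^n\zeta(n)}{n+\omega}\,.
\ee
Next I would let $T\to+\infty$: on the two horizontal sides $\big|(\sin\pi s)^{-1}\big|\leq(\sh\pi T)^{-1}$ uniformly in $\Re s$, while $|\zeta(s)|\leq\zeta(\nicefrac{3}{2})$ throughout $\Re s\geq\nicefrac{3}{2}$ and $|s+\omega|^{-1}=O(1/T)$, so those two sides vanish in the limit. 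Letting then $N\to+\infty$, the far-right side is $O(1/N)$ by the same bound on $\zeta$, together with $|s+\omega|^{-1}=O(1/N)$ and $\int_{\mathbbm{R}}(\ch\pi t)^{-1}\,dt<\infty$, so it too drops out. Only the segment $\Re s=\nicefrac{3}{2}$ survives, and, accounting for the downward orientation it inherits from $\partial\mathscr{R}_{N,T}$, one is left with $\tfrac{1}{2\pi i}\int_{\nicefrac{3}{2}-i\infty}^{\nicefrac{3}{2}+i\infty}g(s)\,ds=-\nu_\omega$.

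It then remains to turn this last integral into a real one. Parametrising the line by $s=\nicefrac{3}{2}+ix$ ($x\in\mathbbm{R}$, $ds=i\,dx$) and invoking the elementary identity $\sin\pi\!\big(\nicefrac{3}{2}+ix\big)=-\ch\pi x$ gives
\be
	\frac{1}{2\pi i}\!\int\limits_{\nicefrac{3}{2}-i\infty}^{\nicefrac{3}{2}+i\infty}\! g(s)\,ds\;=\;-\,\frac{1}{2}\!\int\limits_{-\infty}^{+\infty}\!\frac{\zeta\big(\nicefrac{3}{2}+ix\big)}{\big(\nicefrac{3}{2}+ix+\omega\big)\ch\pi x}\,dx\,,
\ee
which combined with the preceding identity yields \eqref{lkcje2pin}; the substitution $x\mapsto-x$ then shows that the sign in $\nicefrac{3}{2}\pm ix$ is immaterial. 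This mirrors the proof of Theorem~1 line by line, with one simplification: the line $\Re s=\nicefrac{3}{2}$ lies to the right of $s=1$, so the double pole of $g$ there --- with residue $(\omega+1)^{-2}-\gamma(\omega+1)^{-1}$, produced by $\zeta(s)\sim(s-1)^{-1}+\gamma$ against $(\sin\pi s)^{-1}\sim-(s-1)^{-1}$ near $s=1$ --- is no longer enclosed, which is exactly why the extra summands $-(\omega+1)^{-2}+\gamma(\omega+1)^{-1}$ present in \eqref{p084nbc34p} are absent from \eqref{lkcje2pin}. The one step needing real care is the uniform control of $g$ on the expanding contour; but since $\mathscr{R}_{N,T}$ stays inside $\Re s\geq\nicefrac{3}{2}>1$, where $\zeta$ is trivially bounded by $\zeta(\nicefrac{3}{2})$, this is routine here --- in contrast to Theorem~1, where the analogous estimate, on the critical line $\Re s=\nicefrac{1}{2}$, must balance the polynomial growth of $\zeta$ there against the exponential decay of $(\ch\pi x)^{-1}$.
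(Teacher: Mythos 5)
Your proof is correct: the residues of $g$ at $s=n$ are indeed $(-1)^n\zeta(n)/(n+\omega)$, the hypothesis $\Re\omega>-\nicefrac{3}{2}$ keeps both $s=-\omega$ and the double pole at $s=1$ strictly to the left of the rectangle, the horizontal sides die because $|\sin\pi s|\geqslant\sh\pi T$ uniformly in $\Re s$, the far-right side is $O(1/N)$ thanks to $|\zeta(s)|\leqslant\zeta(\nicefrac{3}{2})$ and the integrability of $(\ch\pi t)^{-1}$, and the identity $\sin\pi(\nicefrac{3}{2}+ix)=-\ch\pi x$ converts the surviving vertical integral into \eqref{lkcje2pin} with the correct sign. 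This is the same residue-theoretic idea as the paper's proof of Theorem~1 (which the paper says is to be imitated for Theorem~2), but executed on a genuinely different contour. The paper works in the variable $z=i\big(s-\nicefrac{1}{2}\big)$ and closes the segment $[-R,R]$ by a semicircle of \emph{integer} radius in the upper half--plane; in the limit that contour encloses all the poles at once, so one must sum an infinite series of residues and, more delicately, prove $\int_0^{\pi}\big|\ch\big(\pi Re^{i\varphi}\big)\big|^{-1}d\varphi=O(1/R)$ along an arc that threads between the poles of the secant --- which is exactly why $R$ is restricted to $\mathbbm{N}$. Your rectangle $\mathscr{R}_{N,T}$ encloses only finitely many poles at each stage, its sides stay at fixed distance $\nicefrac{1}{2}$ from the poles of $1/\sin\pi s$, and every estimate reduces to the trivial bound on $\zeta$ in $\Re s\geqslant\nicefrac{3}{2}$; the price is only the bookkeeping of the iterated limit $T\to\infty$, then $N\to\infty$. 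Your closing observation is also the right one: the residue $(\omega+1)^{-2}-\gamma(\omega+1)^{-1}$ at $s=1$ is captured when the line of integration sits at $\Re s=\nicefrac{1}{2}$ but not at $\Re s=\nicefrac{3}{2}$, which is precisely the origin of the extra terms in \eqref{p084nbc34p} versus \eqref{lkcje2pin}.
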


\begin{corollary}
Putting $\omega=k$, $k\in\mathbbm{N}_0$, in previous theorems and comparing \eqref{73bc27cveywue} to the 
expressions obtained in \cite[p.~413, Eq.~(38)]{iaroslav_08}, \cite{coppo_02} and \cite{iaroslav_12}, 
enable us to evaluate the integrals in \eqref{p084nbc34p}
and \eqref{lkcje2pin} in a closed--form. For instance,
\begin{align*}
\int\limits_{-\infty}^{+\infty} \! \frac{\zeta\big(\nicefrac{1}{2} \pm ix\big)}
{\big(\nicefrac{1}{2} \pm ix \big)\ch\pi x}\, dx & = -2 \,,\\
\int\limits_{-\infty}^{+\infty} \frac{\zeta(\nicefrac{1}{2}+ix)}{(\nicefrac{3}{2}+ix)\ch\pi x}\, dx & = \ln2\pi -\frac52\,, \\
\int\limits_{-\infty}^{+\infty} \frac{\zeta(\nicefrac{1}{2}+ix)}{(\nicefrac{5}{2}+ix)\ch\pi x}\, dx & =  \ln2\pi - 4\ln A - \frac{11}{9}\,,\\
\int\limits_{-\infty}^{+\infty} \! \frac{\zeta\big(\nicefrac{3}{2} \pm ix\big)}
{\big(\nicefrac{3}{2} \pm ix \big)\ch\pi x}\, dx & = 2\gamma \,, \\
\int\limits_{-\infty}^{+\infty} \! \frac{\zeta\big(\nicefrac{3}{2} \pm ix\big)}
{\big(\nicefrac{1}{2} \pm ix \big)\ch\pi x}\, dx & = \,
2\!\int\limits_{0}^{1} \! \frac{\Psi(x+1)+\gamma}{x}\, dx\,
= \, 2\kappa_1 +\frac{\pi^2}{6}-\gamma^2-2\gamma_1 \,,
\end{align*}
where $A \equiv e^{\frac{1}{12} -  \zeta^{\prime} (-1)}=1.282427129\ldots$ is the Glaisher-Kinkelin constant,
$\Psi$ is the digamma function (the logarithmic derivative of the $\Gamma$--function), 
$\gamma_1=-0.07281584548\ldots$ is the first Stieltjes constant and \mbox{$\kappa_1=0.5290529699\ldots$} is 
a constant related to Gregory's coefficients $G_n$, see \cite[Appendix]{iaroslav_12}, as well as the sequence A270859 from the OEIS
for more digits of $\kappa_1$.
We do not know
if such a ``simple'' result for these integrals could be found for algebraic or just rational values of $\omega$.
\end{corollary}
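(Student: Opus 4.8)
The plan is to read each of the five displayed formulas as a single instance of Theorem~1 or Theorem~2 at an integer value of $\omega$, matched against a closed form for $\nu_\omega$ obtained by elementary means (these closed forms also appear in \cite{iaroslav_08,coppo_02,iaroslav_12}). Concretely, the first three formulas come from Theorem~1 at $\omega=0,1,2$ --- all admissible since $\Re\omega>-\tfrac12$ --- the fourth from Theorem~2 at $\omega=0$, and the fifth from Theorem~2 at $\omega=-1$, which is legitimate because $-1>-\tfrac32$; note that $\omega=-1$ is \emph{not} permitted in Theorem~1, which is precisely why the fifth integrand carries $\zeta(\nicefrac32\pm ix)$ rather than $\zeta(\nicefrac12\pm ix)$.

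First I would record the required values of $\nu_k$. Starting from the classical expansion
\[
\Psi(1+z)+\gamma=\sum_{n=2}^{\infty}(-1)^n\zeta(n)\,z^{n-1}\,,\qquad |z|<1\,,
\]
multiplying by $z^k$ and integrating term by term over $[0,1]$ --- legitimate for every integer $k\geq-1$ by uniform convergence on $[0,1-\varepsilon]$ together with the Leibniz estimate near the endpoint --- gives $\nu_k=\int_0^1 z^k\big(\Psi(1+z)+\gamma\big)\,dz$. One integration by parts reduces these to integrals of $\ln\Gamma(1+z)$; using Raabe's formula $\int_0^1\ln\Gamma(z)\,dz=\tfrac12\ln2\pi$ and the Glaisher--Kinkelin evaluation $\int_0^1 z\ln\Gamma(z)\,dz=\tfrac14\ln2\pi-\ln A$ (which in turn follows from Kummer's Fourier series for $\ln\Gamma$ and the identity $\zeta'(2)=\tfrac{\pi^2}{6}(\gamma+\ln2\pi-12\ln A)$) one obtains
\[
\nu_0=\gamma,\qquad \nu_1=1-\tfrac12\ln2\pi+\tfrac{\gamma}{2},\qquad \nu_2=\tfrac12+\tfrac{\gamma}{3}-\tfrac12\ln2\pi+2\ln A,
\]
whereas $\nu_{-1}=\int_0^1\frac{\Psi(1+z)+\gamma}{z}\,dz$, an improper but convergent integral since the integrand stays bounded at the origin.

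It then suffices to substitute these values into \eqref{p084nbc34p} and \eqref{lkcje2pin} and to solve each resulting linear relation for the integral in question; for instance, $\omega=0$ in \eqref{p084nbc34p} reads $\gamma=-1+\gamma-\tfrac12\!\int_{-\infty}^{+\infty}\!\frac{\zeta(\nicefrac12\pm ix)}{(\nicefrac12\pm ix)\ch\pi x}\,dx$, which gives the first identity, and the remaining four follow in exactly the same way. In the fifth identity the intermediate form $\nu_{-1}=\int_0^1\frac{\Psi(1+z)+\gamma}{z}\,dz$ is additionally rewritten as $\kappa_1+\tfrac{\pi^2}{12}-\tfrac{\gamma^2}{2}-\gamma_1$ by \cite[Appendix]{iaroslav_12}. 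The ``$\pm$'' notation causes no ambiguity: the substitution $x\mapsto-x$ sends $\zeta(\sigma+ix)/\big((\sigma+ix+\omega)\ch\pi x\big)$ to $\zeta(\sigma-ix)/\big((\sigma-ix+\omega)\ch\pi x\big)$, so the two choices of sign integrate to the same number over the whole real line.

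I expect no real obstacle here: once Theorems~1 and~2 are available, the argument is essentially bookkeeping. The points that do require care are (i) checking that each chosen $\omega$ lies in the half-plane of validity of the theorem invoked, (ii) justifying the term-by-term integration of the above expansion up to the endpoint $z=1$ (and the convergence of the improper integral in the case $k=-1$), and (iii) importing correctly the two non-elementary ingredients --- the Glaisher--Kinkelin value of $\int_0^1 z\ln\Gamma(z)\,dz$ needed for the third formula and the $\kappa_1$-evaluation of $\int_0^1\frac{\Psi(1+z)+\gamma}{z}\,dz$ from \cite{iaroslav_12} needed for the fifth.
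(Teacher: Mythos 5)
Your proposal is correct and follows exactly the route the paper intends (the corollary is left without explicit proof there): substitute specific $\omega$ into Theorems~1 and~2 and compare with the known closed forms of $\nu_k$, which you rederive via $\nu_k=\int_0^1 z^k(\Psi(1+z)+\gamma)\,dz$, Raabe's formula and the Glaisher--Kinkelin evaluation; I checked all five resulting identities and they come out right. You also correctly spot that the fifth formula in fact requires $\omega=-1$ (admissible in Theorem~2 since $\Re\omega>-\tfrac32$, and evaluated via $\kappa_1$ from the cited reference), a point the corollary's preamble ``$\omega=k$, $k\in\mathbbm{N}_0$'' glosses over.
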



\begin{proof}
Consider the following line integral taken along a contour $C$ consisting of the interval $[-R,+R]$, $R\in\mathbbm{N}$,
on the real axis, and a semicircle of the radius $R$ in the upper half-plane, denoted $C_R$,
\be\label{0923un}
\ointctrclockwise\limits_{C} \frac{\zeta\big(\nicefrac{1}{2} - iz\big)}
{\big(\nicefrac{1}{2} - iz+ \omega \big)\ch\pi z}\, \,dz\,=
\int\limits_{-R}^{+R} \! \frac{\zeta\big(\nicefrac{1}{2} - ix\big)}
{\big(\nicefrac{1}{2} - ix+ \omega \big)\ch\pi x}\, dx \, +  \int\limits_{C_R} \!
\frac{\zeta\big(\nicefrac{1}{2} - iz\big)}
{\big(\nicefrac{1}{2} - iz+ \omega \big)\ch\pi z}\,dz \,,
\ee
with $\omega\in\mathbbm{C}$, $\Re\omega>-\tfrac12$.
On the contour $C_R$ the last integral may be bounded as follows:
\begin{eqnarray}\notag
&& \left|\,  \int\limits_{C_R} \!
\frac{\zeta\big(\nicefrac{1}{2} - iz\big)}
{\big(\nicefrac{1}{2} - iz+ \omega \big)\ch\pi z}\,dz  \, \right|   \,=\, R
\left|\, \int\limits_0^{\pi} \!
\frac{\, \zeta\big(\nicefrac{1}{2} - iRe^{i\varphi}\big)  \, e^{i\varphi}\,}
{\,\big(\nicefrac{1}{2} - iRe^{i\varphi}+ \omega \big) \ch \big(\pi Re^{i\varphi}\big) \,}\,d\varphi \, \right|  \\[2mm]
&& \qquad
\,\leqslant\,R \! \max_{\varphi\in[0,\pi]} \!
\left| \frac{\, \zeta\big(\nicefrac{1}{2} - iRe^{i\varphi}\big)\,}
{\,\nicefrac{1}{2} - iRe^{i\varphi}+ \omega \,} \right| \cdot I_R
\,\leqslant \max_{\varphi\in[0,\pi]} \!
\left| \zeta\big(\nicefrac{1}{2} - iRe^{i\varphi}\big)\right| \cdot I_R \label{98ydfn2k}
\end{eqnarray}
where we denoted 
\be\notag
I_R\,\equiv \int\limits_0^{\pi} \!
\frac{\,d\varphi \,}{\,\big|\ch\! \big(\pi Re^{i\varphi}\big) \big|\,}\,,\qquad R>0\,,
\ee
for brevity. 
Now, in the half--plane $\sigma>1$, the absolute value of $\zeta(\sigma+it)$ may be always 
bounded by a constant $C=\zeta(\sigma)$, which decreases and tends to $1$ as $\sigma\to\infty$. 
In contrast, in the strip $0\leqslant\sigma\leqslant1$ the function $\big|\zeta(\sigma+it)\big|$ is unbounded;
presently, the rate of grow is still not known, but it follows from the general theory of Dirichlet series
that it cannot be faster than $O\big(|t|^{1-\sigma}\big)$, $|t|\geqslant\frac{1}{2}$, in the strip $\frac12\leqslant\sigma<1$, 
see \cite[Theorem 35, pp.~99--102]{chudakov_01_eng}.\footnote{There exist, of course, more sharp estimations, 
such as Huxley's estimations or \emph{Lindel\"of hypothesis},
but we do not need them for our proof (see, for more details, e.g.~\cite[Chapt.~XIII]{titchmarsh_02}, 
\cite{edwards_01}, \cite{voronin_01}, \cite{ivic_01}, \cite{huxley_02}, \cite{lifshits_01}).}
Hence, since $\,\sin\varphi\geqslant0\,$ 
and if $R$ is large enough, this rough estimate gives us 
\be\notag
\left| \zeta\big(\nicefrac{1}{2} - iRe^{i\varphi}\big)\right|\,=\,
\left| \zeta\big(\nicefrac{1}{2} + R\sin\varphi - iR\cos\varphi\big)\right|\,
=\,O\!\left(\!\sqrt{R\,}\right)
\ee
in the interval $\varphi\in[0,\pi].$
On the other hand, as $R$ tends to infinity and remains integer the integral $I_R$ tends to zero 
as $O(1/R)$. To show this, we first note that 
\be\notag
\frac{\,1\,}{\,\big|\ch\! \big(\pi Re^{i\varphi}\big) \big|\,}\,=
\,\frac{\sqrt{2}}{\,\sqrt{\ch(2\pi R\cos\varphi) + \cos(2\pi R\sin\varphi)\,}}\,=\,
O\big(e^{-\pi R|\cos\varphi|}\big ) \,,\qquad R\to\infty\,,
\ee
because $\,0\leqslant\varphi\leqslant\pi\,$ and $R\in\mathbbm{N}$.
Since $\,\big|\ch \! \big(\pi Re^{i\varphi}\big)\big|^{-1}\,$ is symmetric about $\varphi=\frac{1}{2}\pi$,
we may write 
\begin{eqnarray}
\notag
I_R && 
=\int\limits_0^{\frac{\pi}{2}} \!
\frac{\,2\sqrt{2} \,}{\,\sqrt{\ch(2\pi R\cos\varphi) + \cos(2\pi R\sin\varphi)\,}} \, d\varphi\\[2mm]
&&  \label{093ujfo3pjf}
= \, O\!\left(\!\int\limits_0^{\frac{\pi}{2}} \! e^{-\pi R\cos\varphi} \, d\varphi  \right) 
= \, O\!\left(\!\int\limits_0^{\frac{\pi}{2}} \! e^{-\pi R\sin\vartheta} \, d\vartheta  \right)\,,
\qquad R\to\infty\,.
\end{eqnarray}
Now, from the well--known inequality
\be\notag
\frac{2\vartheta}{\pi}\leqslant\sin\vartheta\leqslant\vartheta\,,\qquad \vartheta\in\big[0,\tfrac{1}{2}\pi\big]
\ee
we deduce that
\be\label{987y4gb7}
\frac{\,1-e^{-\frac12\pi^2 R}\,}{\pi R}\leqslant
\int\limits_{0}^{\;\frac{\pi}{2}} \!\! e^{-\pi R\sin\vartheta} \, d\vartheta
\leqslant\frac{1-e^{-\pi R}}{2 R}\,,
\ee
whence $I_R=O(1/R)$ at $R\to\infty$.
Inserting both latter results into \eqref{98ydfn2k}, we obtain 
\be\notag
\left|\,  \int\limits_{C_R} \!
\frac{\zeta\big(\nicefrac{1}{2} - iz\big)}
{\big(\nicefrac{1}{2} - iz+ k \big)\ch\pi z}\,dz  \, \right| =\,O\!\left(R^{\nicefrac{-1}{2}}\right)
\to \, 0 \qquad\text{as} \quad R\to\infty\,, \, R\in\mathbb{N}\,.
\ee
Hence, making $R\to\infty$, equality \eqref{0923un} becomes
\be\label{674hbu74u45g345}
\int\limits_{-\infty}^{+\infty} \! \frac{\zeta\big(\nicefrac{1}{2} - ix\big)}
{\big(\nicefrac{1}{2} - ix+ \omega \big)\ch\pi x}\, dx \,= 
\ointctrclockwise\limits_{C} \frac{\zeta\big(\nicefrac{1}{2} - iz\big)}
{\big(\nicefrac{1}{2} - iz+ \omega \big)\ch\pi z}\, \,dz\,, \qquad \Re\omega>-\tfrac12\,,
\ee
where the latter integral is taken around an infinitely large semicircle in the upper half-plane, denoted $C$. 
The integrand is not a holomorphic function: in $C$ it has the simple poles at $z=z_n\equiv i\left(n-\frac12\right)$,
$n=2,3,4,\ldots$, due to the hyperbolic secant, and a double pole at $z=\frac{i}{2}$, due to both the hyperbolic secant and the 
$\zeta$--function.\footnote{Note that the pole $z=-i(\tfrac12+\omega)$, due to the
denominator $\big(\nicefrac{1}{2} - iz+ \omega \big)$, is located in the lower half-plane.} 
Therefore, by the Cauchy residue theorem 
\begin{eqnarray}\notag
&& \ointctrclockwise\limits_{C} \frac{\zeta\big(\nicefrac{1}{2} - iz\big)}
{\big(\nicefrac{1}{2} - iz+ \omega \big)\ch\pi z}\, \,dz\, = \,2\pi i \left\{
\res_{z=\frac{i}{2}} \! \frac{\zeta\big(\nicefrac{1}{2} - iz\big)}
{\big(\nicefrac{1}{2} - iz+ \omega \big)\ch\pi z} \, + 
\sum_{n=2}^\infty \res_{z=z_n} \! \frac{\zeta\big(\nicefrac{1}{2} - iz\big)}
{\big(\nicefrac{1}{2} - iz+ \omega \big)\ch\pi z} \right\}  \\[3mm]
&& \qquad\qquad\qquad\ \notag
= \,2\pi i \left\{
\lim_{z\to\frac{i}{2}} \! \frac{\partial}{\partial z}\frac{(z-\frac12 i)^2\zeta\big(\nicefrac{1}{2} - iz\big)}
{\big(\nicefrac{1}{2} - iz+ \omega \big)\ch\pi z} \, + 
\sum_{n=2}^\infty \,\left. \! \frac{\zeta\big(\nicefrac{1}{2} - iz\big)}
{\pi \big(\nicefrac{1}{2} - iz+ \omega \big)\sh\pi z} \right|_{z=i(n-\frac12)}\right\}  \\[3mm]
&& \qquad\qquad\qquad \notag
= \,2\pi i \left\{\frac{1}{\pi i}\cdot\frac{\gamma(1+\omega)-1}{(1+\omega)^2} \, + \,
\frac{1}{\pi i} \!\sum_{n=2}^\infty (-1)^{n+1}\frac{\zeta(n)}{n+\omega} \right\}  .
\end{eqnarray}
Equating the left part of \eqref{674hbu74u45g345} with the last result yields \eqref{p084nbc34p}.
\end{proof}

\small


\begin{thebibliography}{10}
\providecommand{\url}[1]{\texttt{#1}}
\providecommand{\urlprefix}{URL }
\expandafter\ifx\csname urlstyle\endcsname\relax
  \providecommand{\doi}[1]{doi:\discretionary{}{}{}#1}\else
  \providecommand{\doi}{doi:\discretionary{}{}{}\begingroup
  \urlstyle{rm}\Url}\fi

\bibitem{iaroslav_08}
\textit{\text{Ia.~V.} Blagouchine}, Two series expansions for the logarithm of
  the gamma function involving \text{Stirling} numbers and containing only
  rational coefficients for certain arguments related to $\pi^{-1}$, Journal of
  Mathematical Analysis and Applications, vol.~442, pp.~404--434,
  arXiv:1408.3902  (2016).

\bibitem{iaroslav_12}
\textit{\text{Ia.~V.} Blagouchine} and \textit{M.-A. Coppo}, A note on some
  constants related to the zeta--function and their relationship with the
  \text{Gregory} coefficients, The Ramanujan Journal, vol.~47, pp.~457--473
  (2018).

\bibitem{chudakov_01_eng}
\textit{N.~G. Chudakov}, An introduction to the theory of Dirichlet
  $L$-functions [in Russian], OGIZ Gostehizdat, Leningrad, USSR, 1947.

\bibitem{coppo_02}
\textit{M.-A. Coppo}, A note on some alternating series involving zeta and
  multiple zeta values, Journal of Mathematical Analysis and Applications,
  vol.~475, pp.~1831--1841  (2019).

\bibitem{edwards_01}
\textit{H.~M. Edwards}, The Riemann Zeta--function, Dover publications,
  New--York, USA, 1974.

\bibitem{huxley_02}
\textit{M.~N. Huxley} and \textit{A.~A. Ivi\'c}, Subconvexity for the
  \text{Riemann} zeta function and the divisor problem, Bulletin CXXXIV de
  l'Acad\'emie Serbe des Sciences et des Arts 32, pp.~13--32  (2007).

\bibitem{ivic_01}
\textit{A.~A. Ivi\'c}, The Riemann Zeta--function, Theory and Applications.,
  Dover publications, Mineola, NY, 2003.

\bibitem{voronin_01}
\textit{A.~A. Karatsuba} and \textit{S.~M. Voronin}, The Riemann
  Zeta--function, Walter de Gruyter \& Co., Berlin, 1992.

\bibitem{lifshits_01}
\textit{M.~A. Lifshits} and \textit{M.~Weber}, Sampling the \text{Lindel\"of}
  hypothesis with the \text{Cauchy} random walk, Proceedings of the London
  Mathematical Society, vol.~98, no.~1, pp.~241--270  (2009).

\bibitem{titchmarsh_02}
\textit{E.~C. Titchmarsh}, The theory of the Riemann Zeta--function (2nd
  edition), Clarendon Press, Oxford, 1986.

\end{thebibliography}
\end{document}